\theoremstyle{plain}
\newtheorem{thm}{Theorem}[section]
\newtheorem{prop}[thm]{Proposition}
\newtheorem{defn}[thm]{Definition}
\newtheorem{example}[thm]{Example}
\newtheorem{remark}[thm]{Remark}
\newtheorem{assu}[thm]{Assumption}
\title[Ehrenfest urns]
\author[Mizukawa]{Hiroshi Mizukawa}
\thanks{The author was supported by KAKENHI 15K04802.}
\address{Hiroshi Mizukawa, Department of Mathematics,  National Defense
Academy of Japan,  Yokosuka 239-8686, Japan}
\email{mzh@nda.ac.jp}
\date{}
\keywords{ Gelfand pair of finite groups, Multivariate Krawtchouk polynomial, Cut-off phenomenon}
\subjclass[2010]{Primary: 05E18 ; Secondary: 60C05 }
\begin{document}
\maketitle
\begin{abstract}
Ehrenfest's diffusion model is a well-known classical physical model consisting of two urns 
and $n$ balls. 
A group theoretical interpretation of the model by using the Gelfand pair
$({\mathbb Z}/2{\mathbb Z}\wr S_{n},S_{n})$ is provided by Diaconis-Shahshahani \cite{ds}. 
This  interpretation remains valid for an $r$-urns generalization, in which case,
 the corresponding Gelfand pair  is $(S_{r}\wr S_{n},S_{r-1}\wr S_{n})$.
In these models, there are no restrictions for ball movements, i.e., 
each balls can freely move to any urn. 
This paper introduces interactions between urns arising from actions of finite groups.
It gives a framework of analysis of urn models whose interaction between urns is given by group actions.
The degree of freedom of ball movements is restricted by finite groups actions. 
Furthermore, for some cases, the existence of 
the cut-off phenomenons is shown.
\end{abstract}
\section{Introduction}\label{intro}

Ehrenfest's diffusion model \cite{feller} is a stochastic process consisting of  $n$-balls
$\{b_{1},\ldots,b_{n}\}$ and two urns $\{U_{0},U_{1}\}$. The model is constructed in the following way:
Let ${\mathbb Z}/2{\mathbb Z}=\{0,1 \}$ be a cyclic group of degree 2.
We set $U(2,n)=\{(x_{1},\ldots,x_{n}) \mid  x_{i}\in {\mathbb Z}/2{\mathbb Z}\}$.
We identify an element $(x_{1},\ldots,x_{n})$  of $U(2,n)$ with a state indicating whether  
the ball $b_i$ is in the urn $U_{x_{i}}$ $(x_{i}=0 \ {\rm or}\ 1)$.
We define a stochastic matrix on $U(2,n)$ by
$$p(x,y)=
\begin{cases}
1/(n+1) & x=y,\\
1/(n+1) & d(x,y)=1,\\
0 & otherwise .
\end{cases}$$
Here, $d(x,y)=|\{i \mid x_{i}\not= y_{i},\ 1 \leq i \leq n\}|$ is the Hamming distance on 
$U(2,n)$. The second condition above means that each ball moves to another urn under a probability 
$1$ in $n+1$ at each step.  We can consider a generalization of the above-mentioned
 stochastic process above by increasing the number of urns.  
Let $U(r,n)=\{(x_{1},\ldots,x_{n}) \mid  x_{i}\in S_{r}/S_{r-1}\}$.
Here, $S_{r}$ is the symmetric group on $\{0,1,\cdots,r-1\}$ and $S_{r-1}
=\{\sigma \in S_{r} \mid \sigma(r-1)=r-1\}$ is 
a subgroup of $S_{r}$. 
Then, we define a stochastic matrix on $U(r,n)$
by $$p(x,y)=
\begin{cases}
\frac{1}{n+1} & x=y,\\
\frac{1}{(r-1)(n+1)} & d(x,y)=1,\\
0 & otherwise .
\end{cases}$$
Note that setting $r=2$ gives the original 2-urn case is recovered.
In this generalization, note that each ball  move to any urn in each step.
 
 Diaconis and Shahshahani \cite{ds} analyzed the asymptotic behavior of the 
 $N$-step probability of the original 
 Ehrenfest's diffusion model.
In particular, they showed that the cut-off phenomenon occurs in this case.
 For general $r$-urns cases, Hora \cite{hora1,hora2} gave precise analysis of the asymptotic behavior of the $N$-step probability.
 He also showed that the cut-off phenomenon occur in these cases.
 
 In these cases, each model is realized by certain finite homogenous space arising from 
 a finite Gelfand pair $(S_{r}\wr S_{n},S_{r-1}\wr S_{n})$ $(n=2,3,4,\ldots)$.
We remark that the theory of finite Gelfand pair works well in these cases.
One can find  detailed descriptions of the theory of Gelfand pairs 
in Macdonald's book \cite{mc} and its relation to probability theory in \cite{cst}.
However, in these cases, balls can freely move to any urns at each step, or in other words,
there are no interactions between urns. Of course, many other interactions could be
considered.  In this paper, we give some interactions between urns through actions of 
finite groups actions.
It gives a framework of analysis of urn models whose interaction between urns is given by group actions.

The main tool considered in this study
 is  a multivariate hypergeometric type orthogonal polynomial called a multivariate Krawtchouk
polynomial \cite{gr1,mz}. 
An important feature is that the Krawtchouk polynomials can express the zonal spherical functions of a 
 finite Gelfand pair $(K \wr S_{n},L \wr S_{n})$ \cite{mt}. 
Multivariate Krawtchouk polynomials are known to have
 an application in the  stochastic model of the poker dice game \cite{gr}.
In this paper, we present another stochastic theoretical application of the multivariate Krawtchouk polynomials, that is, to the  Ehrenfest's diffusion models.

The remainder of this paper is organized as follows. 
 In section 2, we construct interaction urn models by using a finite Gelfand pair $(K,L)$ and 
 prepare some tools  to analyze our models. 
  In section 3, we compute upper and lower bounds of the total variation distance between
 an $N$-step probability and the uniform distribution on our models. 
As a conclusion of our discussion, we show that our models have the cut-off phenomenon.

\section{Preliminaries}
Let $K$ be a finite group and $L$ be its subgroup.
Set $e_{L}=\frac{1}{|L|}\sum_{h \in L}h$ which is an element of the group 
algebra ${\mathbb C}K$. Then the following  conditions are eqivalent:
(1) The Hecke algebra ${\mathcal H}(K,L)=e_{L}{\mathbb C}K
e_{L} \subset {\mathbb C}K$  is commutative.
(2) The permutation representation ${\mathbb C}Ke_{L}$ is multiplicity-free as $K$-module.
These condition are satisfied,
the pair $(K,L)$ is called a {\it Gelfand pair}. 
Here we remark that ${\mathcal H}(K,L)$ can be identified with the ring
of bi-$L$ invariant functions, i.e. functions on double coset $L\backslash 
K/L$, on $K$. 
Assume from now that $(K,L)$ is a Gelfand pair and
multiplicity-free decomposition of the permutation representation is given by
${\mathbb C}e_{L}=\bigoplus_{i=0}^{s-1} V_{i}$.
It is fact that $s=\dim {\mathcal H}(K,L)$. 
Let $\chi_{i}$ be the character of $V_{i}$. We define function $\omega_{i}$ by
$\omega_{i}(g)=\frac{1}{|L|}\sum_{h \in L}\overline{\chi_{i}(gh)}$.
The functions $\omega_{i}\ (0 \leq i \leq s-1)$ are called {\it zonal spherical functions}
 of Gelfand pair $(K,L)$. The zonal spherical functions is a  basis of ${\mathcal H}(K,L)$
 satisfying orthogonality relations
$$\frac{1}{|K|}\sum_{g\in K}
\omega_{i}(g){\overline {\omega_{j}(g)}}=
\delta_{ij}\dim V_{i}^{-1}.$$
Therefore we can expand a bi-$L$ invariant function  on $K$ by using the 
zonal spherical functions. 
One can find more detailed descriptions of the theory of Gelfand pairs 
in Macdonald's book \cite{mc}

Let $K$ be a finite group. 
%The symmetric group $S_{n}$ acts on
%$n$-fold direct product of $K$ by the permutation of the elements.
 The wreath product $K \wr S_{n}=\{(g_{1},\ldots,g_{n};\sigma)\mid
 g_{i}\in K,\ \sigma \in S_{n}\}$ is
the semidirect product of $K^{n}$ with $S_{n}$  whose product is defined by 
%this action
$$(g_{1},\ldots,g_{n};\sigma)(g'_{1},\ldots,g'_{n};\sigma')=(g_{1}g'_{\sigma^{-1}(1)},\ldots,g_{n}g'_{\sigma^{-1}(n)};\sigma\sigma').$$
Let a positive integer $t$ be the number of conjugacy classes of $K$. 
The irreducible representations of $K \wr S_{n}$ is determined by the
$t$-tuples
 of partitions $(\lambda^{0},\ldots,\lambda ^{t-1})$ such that $|\lambda^{0}|+\cdots+|\lambda^{t-1}|=n$ (see \cite{jam}).

The next proposition is the fundamental result of \cite{mt}.
\begin{prop}[\cite{mt}]\label{zsf}
If $(K,L)$ is a Gelfand pair, then $(K \wr S_{n},L \wr S_{n})$ is also a Gelfand pair.
The double coset 
and the zonal spherical functions of $(K \wr S_{n},L \wr S_{n})$ are
 parameterized by $X(s,n)=\{(k_{0},\ldots,k_{s-1}) \mid k_{0}+\cdots +k_{s-1}=n\}$.
The values of the zonal spherical functions $\Omega_{k}$
at $x=(x_{1},\ldots,x_{n};\sigma) \in K \wr S_{n}$
 are given by the coefficients of $t^{k}$ in
$$\Phi_{(K,L;n)}(x)=\prod_{i=1}^n\left(\sum_{j=0}^{s-1}\omega_{j}(x_{i})t_{j}\right)=\sum_{k \in X(s,n)}
\binom{n}{k}\Omega_{k}(x)t^k.$$
Here, $\binom{n}{k}=\binom{n}{k_{0},\ldots,k_{r-1}}$ for $k=(k_{0},\ldots,k_{r-1}) \in X(s,n)$.
 \end{prop}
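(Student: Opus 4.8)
The plan is to realize $K\wr S_n/L\wr S_n$ concretely as an $n$-fold ``tensor'' object built from $K/L$, decompose its permutation module to transfer the Gelfand property from $(K,L)$ to the wreath product, and read off the spherical functions $\Omega_k$ as matrix coefficients along the way. First I would identify the coset space: writing $(g_1,\dots,g_n;\sigma)=(g_1,\dots,g_n;1)(1,\dots,1;\sigma)$ and absorbing $(1,\dots,1;\sigma)\in L\wr S_n$ on the right, one checks that $K\wr S_n/L\wr S_n$ is in bijection with $(K/L)^n$, with $K^n$ acting by coordinatewise left translation and $S_n$ permuting coordinates, i.e.\ the tautological wreath-product action. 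Hence $\mathbb{C}[K\wr S_n/L\wr S_n]\cong\mathbb{C}[K/L]^{\otimes n}$ as $K\wr S_n$-modules, with $S_n$ permuting the tensor factors. The $L\wr S_n$-orbits on $(K/L)^n$ are recorded by the multiset of $L$-orbit types of the $n$ coordinates; since $(K,L)$ has $s$ double cosets, these orbits are parameterized by $X(s,n)$, which gives the asserted parameterization of double cosets and shows $\dim\mathcal H(K\wr S_n,L\wr S_n)=|X(s,n)|$.

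Next I would establish multiplicity-freeness. Expanding $\mathbb{C}[K/L]^{\otimes n}=\bigoplus_{(j_1,\dots,j_n)}V_{j_1}\otimes\cdots\otimes V_{j_n}$ and collecting summands by the content $k=(k_0,\dots,k_{s-1})\in X(s,n)$ of the index tuple yields a $K\wr S_n$-stable block $\widetilde V_k:=\bigoplus_{\mathrm{content}(j)=k}V_{j_1}\otimes\cdots\otimes V_{j_n}$, and these exhaust $\mathbb{C}[K/L]^{\otimes n}$. Because each $V_j$ occurs in $\mathbb{C}[K/L]$ with multiplicity one, $\dim V_j^{L}=1$, so $(\widetilde V_k)^{L^n}$ has dimension $\binom{n}{k}$; moreover $S_n$ permutes the basis $\{v_{j_1}\otimes\cdots\otimes v_{j_n}\}_{\mathrm{content}(j)=k}$ of this space transitively, so $(\widetilde V_k)^{L\wr S_n}$ is one-dimensional, spanned by the symmetrization $v_k:=\sum_{\mathrm{content}(j)=k}v_{j_1}\otimes\cdots\otimes v_{j_n}$. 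Since any nonzero submodule of $\mathbb{C}[K\wr S_n/L\wr S_n]$ carries a nonzero $L\wr S_n$-fixed vector, a one-dimensional $(\widetilde V_k)^{L\wr S_n}$ forces $\widetilde V_k$ to be irreducible; and for $k\ne k'$ the modules $\widetilde V_k$ and $\widetilde V_{k'}$ share no irreducible $K^n$-constituent, hence are non-isomorphic. Therefore $\mathbb{C}[K\wr S_n/L\wr S_n]=\bigoplus_{k\in X(s,n)}\widetilde V_k$ is multiplicity-free, so $(K\wr S_n,L\wr S_n)$ is a Gelfand pair with spherical functions indexed by $X(s,n)$, one for each $\widetilde V_k$.

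Then I would compute the spherical function of $\widetilde V_k$ as the normalized matrix coefficient $\Omega_k(g)=\langle v_k,\,g\,v_k\rangle/\langle v_k,v_k\rangle$, where the $v_j$ are the spherical vectors chosen so that $\langle v_a,\,g\,v_b\rangle=\delta_{ab}\,\omega_a(g)$ for $g\in K$ (distinct $V_j$ being orthogonal). For $g=(x_1,\dots,x_n;\sigma)$ the action $g\cdot(w_1\otimes\cdots\otimes w_n)=x_1 w_{\sigma^{-1}(1)}\otimes\cdots\otimes x_n w_{\sigma^{-1}(n)}$, expanded bilinearly and simplified with $\langle v_a,\,x_i\,v_b\rangle=\delta_{ab}\omega_a(x_i)$, collapses the double sum over index tuples to $\langle v_k,\,g\,v_k\rangle=\sum_{\mathrm{content}(j)=k}\prod_{i=1}^n\omega_{j_i}(x_i)$; the permutation $\sigma$ only rematches terms and drops out, consistently with $\Phi_{(K,L;n)}$ not depending on $\sigma$. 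But this is exactly the coefficient of $t^k=\prod_{j}t_j^{k_j}$ in $\prod_{i=1}^n\bigl(\sum_{j=0}^{s-1}\omega_j(x_i)t_j\bigr)=\Phi_{(K,L;n)}(g)$, and putting $x_i=1$ with $\omega_j(1)=1$ gives $\langle v_k,v_k\rangle=\binom{n}{k}$. Hence $\Phi_{(K,L;n)}(g)=\sum_{k\in X(s,n)}\binom{n}{k}\Omega_k(g)\,t^k$ with the $\Omega_k$ the zonal spherical functions, as claimed.

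The main obstacle is the multiplicity-freeness in the second step — showing that each block $\widetilde V_k$ is irreducible and that distinct $k$ give non-isomorphic modules. This is the only point where the hypothesis that $(K,L)$ is a Gelfand pair is genuinely used, through $\dim V_j^L=1$, and the efficient route is the fixed-vector count above rather than an explicit decomposition of $K\wr S_n$-representations; the one thing requiring care is the observation that the $L^n$-fixed space of a full $S_n$-orbit of tensor summands carries exactly the transitive permutation representation of $S_n$ on content-$k$ tuples, whose space of invariants is one-dimensional. Everything else — the coset identification, the matrix-coefficient expansion, and the multinomial normalization — is routine bookkeeping with the wreath-product multiplication law.
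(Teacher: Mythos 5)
The paper offers no proof of this proposition: it is imported verbatim from \cite{mt} ("the fundamental result of \cite{mt}"), so there is nothing internal to compare your argument against. Judged on its own, your proof is correct and complete. The identification $\mathbb{C}[K\wr S_n/L\wr S_n]\cong\mathbb{C}[K/L]^{\otimes n}$, the grouping of tensor summands into content blocks $\widetilde V_k$, and the count $\dim(\widetilde V_k)^{L\wr S_n}=1$ (from $\dim V_j^L=1$ and transitivity of $S_n$ on content-$k$ index tuples), combined with the fact that every nonzero submodule of a permutation module carries a nonzero fixed vector (Frobenius reciprocity), do force each $\widetilde V_k$ to be irreducible; disjointness of the $K^n$-constituents then gives multiplicity-freeness, and the matrix-coefficient expansion with $\langle v_k,v_k\rangle=\binom{n}{k}$ reproduces the generating function $\Phi_{(K,L;n)}$ exactly, including the automatic independence of $\sigma$ (which one can also see directly from $(1,\dots,1;\sigma)\in L\wr S_n$). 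Two small points deserve a line each in a write-up: the parameterization of double cosets by $X(s,n)$ uses that the number of $L$-orbits on $K/L$ equals $s=\dim{\mathcal H}(K,L)$, which is where the Gelfand hypothesis enters a second time; and the identity $\langle v_a,xv_a\rangle=\omega_a(x)$ matches the paper's definition $\omega_i(g)=\frac{1}{|L|}\sum_{h\in L}\overline{\chi_i(gh)}$ only after fixing the usual conjugation/inversion convention consistently on both sides of the product formula. The source \cite{mt} proves the statement in the broader setting of character algebras; your specialization to honest group Gelfand pairs is the natural one here and yields an entirely elementary, self-contained argument, at the cost of not covering that extra generality (which this paper never uses).
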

 The functions $\Omega_{k}$ in Proposition \ref{zsf} are called 
 {\it the multivariate Krawtchouk polynomials} (\cite{gr1,gr, mz}).

\section{Formulation of interaction urn models arising from group action}
Throughout this paper,  a pair of finite groups
$(K,L)$ is a Gelfand pair. 
For the irreducible decomposition of permutation representation ${\mathbb C}Ke_{L}=\bigoplus_{i=0}^{s-1}V_{i}$,
let $V_{0}$ be the trivial representation of $K$. Consider $d_{i}=\dim V_{i}\ (0 \leq i \leq s-1)$. 
Let $\omega_{i}$ be a zonal spherical function of $(K,L)$ afforded by $V_{i}$.

Fix an element $x_{0} \in K- L$. 
Set $D_{0}=L x_{0} L$.
 Put $$\begin{cases}
  r=\frac{|K|}{|L|},\\
 m=\frac{|D_{0}|}{|L|}.
 \end{cases}$$  
For $x\in K^n$, we denote by $\overline{x}$ a left coset $xL^n \in K^n/L^n$. 
%%%
%%
%
\begin{defn}\label{p}
Let $0\leq mp \leq1$.
%We assume $p$ is a monotonically increasing function of $n$ which satisfies 
%$\lim_{n\rightarrow \infty} mp=1$.
\begin{enumerate}
\item[(1)]
We define a function on $(K/L)^n \times (K/L)^n$ by
$$p(\overline{x},\overline{y})=\begin{cases}
1-mp & x^{-1}y \in L^{n}\\
p/n& x^{-1}y \in L^{k}\times D_{0} \times L^{n-k-1}\\
0 & {\rm otherwise}.
\end{cases}$$
\item[(2)]
We define a matrix $P_{(K,L;n)}$ by $$P_{(K,L;n)}=(p(\overline{x},\overline{y}))_{\overline{x},\overline{y}\in (K/L)^n \times (K/L)^n}.$$
\end{enumerate}
\end{defn}
In the above definition, we remark two points.
First the conditions $x^{-1}y \in L^n$ and $ x^{-1}y\in L^{k}\times D_{0} \times L^{n-k-1}$ do not depend on the choice of a complete set of 
representatives of $(K/L)^n$. 
Second the random walk depends on the choice of $x_{0}$.
Here, if we  
identify $(K/L)^n=\{(x_{1},\ldots,x_{n})\mid x_{i}\in K/L\}$ with the states  
$U(r,n)=\{(x_{1},\ldots,x_{n})\mid 0\leq x_{i} \leq r-1\}$ as a set,
we can interpret  Definition \ref{p} as a certain stochastic process of $n$-balls and $r$-urns.
The interaction between urns is determined by a Gelfand pair $(K,L)$. 
To understand the meaning of the interpretation, we provide some examples. 
%
%%
%%%
\begin{example}
In the below examples, we define $d(\overline{x},\overline{y})=|\{i\mid x_{i}\not=y_{i}\}|$
for $\overline{x}=(\overline{x_{1}},\ldots,\overline{x_{n}})$ and 
 $\overline{y}=(\overline{y_{1}},\ldots,\overline{y_{n}})$.
 \begin{enumerate}\label{shut}
\item[(1)] Let $K=S_{r}$ and $L=S_{r-1}$. Here, $S_{r-1}$ is a subgroup of $S_{r}$ generated by 
transpositions $\{(i,i+1)\mid 1\leq i \leq r-2\}$.
A complete system of representatives for the left cosets and that for the double cosets of $L$
in $K$ are given by  
$$K/L=\{e_{K},(1,r),(2,r),\ldots,(r-1,r)\}\ \ {\rm and}\ \ L\backslash K/L=\{e_{K},(1,r)\}.$$
Let $x_{0}=(1,r)$. Then, we have $|D_{0}|=(r-1)(r-1)!$ and $m=(r-1)$.
We have $(i,r)(j,r) \in 
\begin{cases}
 D_{0}& i \not=j \\
L& i=j.
\end{cases}$
Now, we identify a left coset $(i,r)L$ with an urn $U_{i}$ $(0 \leq i \leq r-1)$. 
If we set $p=\frac{n}{(r-1)(n+1)}$, then $P_{(K,L;n)}$ is the same matrix of an $r$-urns generalization of  Ehrenfest's diffusion model introduced in Section \ref{intro}.
\item[(2)] Let  $K=\langle a\mid a^r=e  \rangle$ be a cyclic group and
its subgroup $L=\{e\}$.  
Setting $x_{0}=a$,
we have $m=|LaL|/|L|=1$.
%For $a=(a^{i_1},\ldots,a^{i_n}),a'=(a^{j_1},\ldots,a^{j_n})\in (K/L)^n$,
Then,
$$a^{-i}a^{j} \in D_{0}\Leftrightarrow 
j-i\equiv
1 \pmod{r}.$$
Set $\overline{x}=x=(a^{i_{1}},\ldots,a^{i_{n}})$ and 
$\overline{y}=y=(a^{j_{1}},\ldots,a^{j_{n}})$.
Then,
the $(\overline{x},\overline{y})$-elements of $P_{(K,L;n)}$ are given by
$$
p(\overline{x},\overline{y})
=\begin{cases}
1-p & \overline{x}=\overline{y}\\
p/n& d(\overline{x},\overline{y})=1\ and\  j_{m}-i_{m} \equiv{0\ {\rm or\ 1}} \pmod{r}\ \\
0 & otherwise.
\end{cases}
$$
This case can be interpreted as  an $n$-balls and $r$-urns  model as follows:
At each step, 
 a ball is randomly chosen with the probability $1/n$
 and the chosen ball, which is in urn $U_{i}$,
 either  moves only to its left neighbor urn $U_{i+1}$   $(i,i+1 \in {\mathbb Z}/r{\mathbb Z})$ 
  with probability $p$
 or stays in the same urn with probability $1-p$.
\item[(3)] We consider a dihedral group
$K=D_{r}=\langle a,b \mid a^{r}=b^2=(ab)^2=1 \rangle$ and its subgroup $L=\langle b\rangle$.
A complete representatives of $L\backslash K/L$ is given by 
$\{1,a,a^2,\cdots,a^{[\frac{r}{2}]}\}$.
Set $x_{0}=a$. We have $m=|LaL|/|L|=2$. 
Set $\overline{x}=(a^{i_{1}},\ldots,a^{i_{n}})$ and 
$\overline{y}=(a^{j_{1}},\ldots,a^{j_{n}})$.
Since $a^{-i}a^{j}\in LaL \Leftrightarrow j \equiv i \pm1 \pmod{r}$,
%For $a=(a^{i_1},\ldots,a^{i_n}),a'=(a^{j_1},\ldots,a^{j_n})\in (K/L)^n$,
%we have 
%$$a^{-1}a' \in L^{k}\times D_{0} \times L^{n-k-1} \Leftrightarrow 
%j_{m}-i_{m}\equiv
%\begin{cases}
%1 \pmod{r}& (m =k+1)\\
%0 \pmod{r}& (m\not= k+1).
%\end{cases}$$
the $(\overline{x},\overline{y})$-elements of $P_{(K,L;n)}$ are given by
$$p(\overline{x},\overline{y})=\begin{cases}
1-2p & \overline{x}=\overline{y}\\
p/n& d(\overline{x},\overline{y})=1\ {\rm and}\ \ j_{k}-i_{k}\equiv 0\ {\rm or}\ \pm 1 \pmod{r}\\
0 & otherwise.
\end{cases}$$
%where $a=(a^{i_1},\ldots,a^{i_n})$ and $a'=(a^{j_1},\ldots,a^{j_n})\in (K/L)^n$.
This case can be interpreted  as an $n$-balls and $r$-urns  model as follows:
At each step, 
 a ball is randomly chosen with the probability $1/n$
 and the chosen ball, which is in urn $U_{i}$,
  moves only to either of its both neighbor urns $U_{i-1}$ and $U_{i+1}$
$(i,i\pm1 \in {\mathbb Z}/r{\mathbb Z})$
  with probability $p$
 or stays in the same urn with probability $1-2p$.
$(i,i\pm1 \in {\mathbb Z}/r{\mathbb Z})$.
\item[(4)] In general, $(K\times K,\Delta K)$ is a Gelfand pair for any finite group $K$.
Here $\Delta K=\{(g,g)\mid g \in K\}$ is a diagonal subgroup of $K \times K$.
This fact supports that our model includes numerous examples. 
\end{enumerate}
\end{example} 
%%%
%%
%
From these examples, we see that 
 $m$ gives the number of directions
  of ball movements.
%Throughout this paper, we need the following assumption. 
%\begin{assu}\label{pf}
%We assume that the matrix $P_{(K,L;n)}$ is an irreducible i.e., $P_{(K,L;n)}$
%does not have block diagonal components more than two.
%\end{assu}
%We remark that all cases in Example \ref{shut} satisfy this assumption.\\

 We define an action of $K \wr S_{n}$ on $(K/L)^n$ by
$$(g_{1},\ldots,g_{n}:\sigma)(\overline{x_{1}},\ldots,\overline{x_{n}})=
(\overline{g_{1}x_{\sigma(1)}},\ldots,\overline{g_{n}x_{\sigma(n)}}),$$
where $\overline{x_{i}} =x_{i}L$ ($x_{i} \in K$). 
From this definition, we see that $K \wr S_{n}$ acts on $(K/L)^n$ transitively. 
 \begin{prop}\label{trp}
 $P_{(K,L; n)}$ is  a $K \wr S_{n}$-invariant stochastic matrix.
\end{prop}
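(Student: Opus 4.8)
The plan is to verify the two assertions separately: that $P_{(K,L;n)}$ is a stochastic matrix (nonnegative entries, rows summing to $1$), and that it is invariant under the action of $K\wr S_n$ on $(K/L)^n$. The first part is essentially the content of the constraint $0\le mp\le 1$ imposed in Definition \ref{p}, so the main substance is the invariance.

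For stochasticity, I would fix $\overline{x}\in(K/L)^n$ and enumerate the $\overline{y}$ with $p(\overline{x},\overline{y})\ne 0$. There is exactly one $\overline{y}$ with $x^{-1}y\in L^n$, namely $\overline{y}=\overline{x}$, contributing $1-mp$. For each coordinate $k\in\{1,\dots,n\}$, the cosets $\overline{y}$ with $x^{-1}y\in L^{k-1}\times D_0\times L^{n-k}$ are those that differ from $\overline{x}$ only in position $k$, and there the $k$-th coset lies in the image of $D_0$; since $|D_0|/|L|=m$, there are exactly $m$ such cosets, each contributing $p/n$. These families are disjoint (an element of $L^{k-1}\times D_0\times L^{n-k}$ cannot lie in $L^n$ because $D_0=Lx_0L$ with $x_0\notin L$, and the families for distinct $k$ are disjoint because $L\cap D_0=\emptyset$ forces the differing coordinate to be unique). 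Hence the row sum is $(1-mp)+n\cdot m\cdot(p/n)=1$, and all entries are nonnegative by hypothesis. I would also note, as the paper already remarks, that these conditions are independent of the choice of coset representatives, which is what makes $p$ well defined.

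For $K\wr S_n$-invariance, the claim is that $p(g\cdot\overline{x},g\cdot\overline{y})=p(\overline{x},\overline{y})$ for all $g\in K\wr S_n$. Write $g=(g_1,\dots,g_n;\sigma)$. The key observation is that the value of $p(\overline{x},\overline{y})$ depends only on the tuple $(x_1^{-1}y_1,\dots,x_n^{-1}y_n)$ up to the double-coset $L\backslash K/L$ class in each coordinate — more precisely, on which coordinates lie in $L$ versus in $D_0=Lx_0L$. So it suffices to check that applying $g$ transforms this tuple of double cosets by a permutation only. If $\overline{x'}=g\cdot\overline{x}$ and $\overline{y'}=g\cdot\overline{y}$, then representatives are $x'_i=g_i x_{\sigma(i)}$ and $y'_i=g_i y_{\sigma(i)}$ (up to right multiplication by $L$), so $(x'_i)^{-1}y'_i = (g_i x_{\sigma(i)})^{-1}(g_i y_{\sigma(i)}) = x_{\sigma(i)}^{-1}g_i^{-1}g_i y_{\sigma(i)} = x_{\sigma(i)}^{-1}y_{\sigma(i)}$, again up to the ambiguity of right $L$-multiplication coming from the coset representatives. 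Thus the tuple of coordinatewise double-coset classes of $g\cdot\overline{x}$ versus $g\cdot\overline{y}$ is just the $\sigma$-permutation of that for $\overline{x}$ versus $\overline{y}$. Since the definition of $p$ is symmetric in the coordinates (the condition $x^{-1}y\in L^k\times D_0\times L^{n-k-1}$ is "there is exactly one coordinate in $D_0$ and the rest in $L$", a permutation-invariant statement), we get $p(g\cdot\overline{x},g\cdot\overline{y})=p(\overline{x},\overline{y})$.

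The one point requiring genuine care — the main obstacle — is the bookkeeping with coset representatives: the action is defined on $K/L$, so $x_i^{-1}y_i$ is only well defined up to $L\cdot x_i^{-1}y_i\cdot L$ wait, actually up to $h_i^{-1}(\,\cdot\,)h_i'$ with $h_i,h_i'\in L$, i.e. up to the double coset, and one must confirm that whether $x_i^{-1}y_i\in L$ or $\in D_0$ is exactly a function of that double coset (it is, since $L$ and $D_0=Lx_0L$ are single double cosets). Once that is cleanly stated, both the stochasticity computation and the invariance computation are short. I would organize the writeup as: (i) a lemma that $p(\overline{x},\overline{y})$ is a function of $(Lx_i^{-1}y_iL)_{i=1}^n$ alone; (ii) the row-sum computation using $|D_0/L|=m$; (iii) the identity $(x'_i)^{-1}y'_i\sim x_{\sigma(i)}^{-1}y_{\sigma(i)}$ together with permutation-symmetry of the defining cases.
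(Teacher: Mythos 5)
Your proposal is correct and follows essentially the same route as the paper: the row sum is computed by counting, for each coordinate, the $m=|D_0|/|L|$ cosets reachable with weight $p/n$, giving $(1-mp)+n\cdot m\cdot(p/n)=1$ (the paper first uses invariance to translate to the base point $\overline{e}$, while you count at a general $\overline{x}$, but the computation is the same). The only difference is that you spell out the $K\wr S_n$-invariance via the identity $(g_ix_{\sigma(i)})^{-1}(g_iy_{\sigma(i)})=x_{\sigma(i)}^{-1}y_{\sigma(i)}$ and the well-definedness of the coordinatewise double cosets, a step the paper simply declares clear from the definition; your version is the more complete one.
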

\begin{proof}
It is clear that $p(g\overline{x},g\overline{y})=p(\overline{x},\overline{y})\geq 0$ from Definition \ref{p}.
Let $e_{K}$ be the identity element of $K$ and let $\overline{e}=(\overline{e_{K}},\ldots,\overline{e_{K}}) \in (K/L)^n$.
We compute 
\begin{align*}
\sum_{y \in (K/L)^n}p(\overline{x},\overline{y})&=\sum_{y \in (K/L)^n}p(\overline{e},\overline{x^{-1}y})\\
&=\sum_{y \in (K/L)^n}p(\overline{e},\overline{y})\\
&=p(\overline{e},\overline{e})+\sum_{i=1}^n\sum_{\overline{y} \in \{\overline{e_{K}}\}^{i-1}\times D_{0}/L \times \{\overline{e_{K}}\}^{n-i}}
p(\overline{e},\overline{y})\\
&=(1-mp)+n(mp/n)=1.
\end{align*}
\end{proof}
 
Thus, with the natural identification between $(K/L)^n$ and $K \wr S_{n}/L \wr S_{n}$,
 we can analyze our stochastic model by using the theory of a Gelfand pair $(K \wr S_{n},
L \wr S_{n})$. 
\section{$N$-step probability}
We attempt to obtain the $N$-step probability, i.e., we compute $P_{(K,L;n)}^N$.
We recall a brief review of the relation between the
$N$-step probability and the theory of finite Gelfand pairs in our case.
We fix $\overline{e}=(\overline{e_{K}},\ldots,\overline{e_{K}}) \in (K/L)^n$ as an intial state.
Let $p(\overline{x},\overline{y})$ be an $(\overline{x},\overline{y})$-entry of $P_{(K,L;n)}$.
Set $\nu(\overline{x})=
p(\overline{e},\overline{x})$ and 
$\tilde{\nu}(g)=\frac{1}{\ell^nn!}\nu(\overline{g})$ ($g \in K\wr S_{n}$),
where $\ell=|L|$. Then, from Proposition \ref{trp}, $\tilde{\nu}$ is a bi-$L \wr S_{n}$ invariant function on $K\wr S_{n}$.  
Therefore, we can expand $\tilde{\nu}$ as a linear combination of $\Omega_{k}$s.
Let $\tilde{\nu}=\sum_{k \in X(s,n)}\frac{d_{k}}{|K\wr S_{n}|}f(k)\Omega_{k}$,
where $d_{k}=d_{0}^{k_{0}}\cdots d_{s-1}^{k_{s-1}}\binom{n}{k}$ is the dimension of an irreducible representation that affords to $\Omega_{k}$ (\cite{mt}). 
Let $\nu_{N}(\overline{g})$ be the $(\overline{e},\overline{g})$-entry  of $P_{(K,L;n)}^N$.
%%%
We compute
\begin{align*}
\nu_{N}(\overline{g})&=\sum_{\overline{x_{1}},\cdots,\overline{x_{N-1}}\in (K/L)^n}
p(\overline{e},\overline{x_{1}})
p(\overline{x_{1}},\overline{x_{2}})
\cdots
p(\overline{x_{N-1}},\overline{g})\\
&=
\sum_{\overline{x_{1}},\cdots,\overline{x_{N-1}}\in (K/L)^n}p(\overline{e},\overline{x_{1}})
p(\overline{e},\overline{x_{1}^{-1}x_{2}})
\cdots
p(\overline{e},\overline{x_{N-1}^{-1}g})\\
&=|L\wr S_{n}|\sum_{x_{1},\cdots,x_{N-1} \in K \wr S_{n}}\tilde{\nu}(x_{1})\tilde{\nu}(x_{1}^{-1}x_{2})\cdots \tilde{\nu}(x_{N-1}^{-1}g)\\
&=|L\wr S_{n}|\tilde{\nu}^{*N}(g) \ \ \ (N{\rm th.\ convolution\ power}).
\end{align*}
%%%%
Then the idempotence of the zonal spherical functions \cite{mc} gives 
$$\nu_{N}(\overline{g})=\frac{1}{r^n}\sum_{k \in X(s,n)}{d_{k}}f(k)^{N}\Omega_{k}(g).$$ 
We call the $f(k)$'s the {\it Fourier coefficients}. 
\begin{thm}\label{fou}
For $k=(k_{0},\ldots,k_{s-1}) \in X(r,n),$ we have
$$f(k)=(1-mp)+mp\left(\sum_{i=0}^{s-1}\frac{k_{i}}{n}\overline{\omega_{i}(x_{0})}\right).$$
\end{thm}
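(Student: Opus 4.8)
The plan is to compute the Fourier coefficient $f(k)$ directly from its defining relation $\tilde\nu = \sum_{k\in X(s,n)} \frac{d_k}{|K\wr S_n|} f(k)\,\Omega_k$, using the orthogonality of the zonal spherical functions $\Omega_k$ to extract $f(k)$ as an inner product. Concretely, orthogonality gives
$$
f(k) = \sum_{g\in K\wr S_n} \nu(\overline{g})\,\overline{\Omega_k(g)}\Big/\big(\text{appropriate normalization}\big),
$$
which, after passing from the function on $K\wr S_n$ back to the function $p(\overline{e},\cdot)$ on $(K/L)^n$, becomes a sum of $p(\overline{e},\overline{y})\,\overline{\Omega_k(y)}$ over $\overline{y}\in(K/L)^n$. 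Since $p(\overline{e},\overline{y})$ is supported on $\overline{e}$ itself (weight $1-mp$) and on the $n$ "one-step" cosets $\{\overline{e_K}\}^{i-1}\times D_0/L\times\{\overline{e_K}\}^{n-i}$ (total weight $mp$, spread as $mp/n$ over each of the $m$ representatives in each slot), the sum collapses to two contributions.

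The first step is to evaluate $\Omega_k$ at the identity and at a one-step neighbor. By the generating function in Proposition \ref{zsf}, $\Omega_k(\overline{e})$ comes from $\prod_{i=1}^n\big(\sum_j \omega_j(e_K)t_j\big)$; since $\omega_j(e_K)=1$ for all $j$ (zonal spherical functions are normalized to $1$ at the identity), this product is $(\sum_j t_j)^n = \sum_k \binom{n}{k} t^k$, so $\Omega_k(\overline{e})=1$. For a one-step neighbor $x$ with $x_i\in D_0$ in one coordinate (say coordinate $\ell$) and $x_i\in L$ elsewhere, the generating function factors as $\big(\sum_j \omega_j(x_0) t_j\big)\cdot(\sum_j t_j)^{n-1}$, because $\omega_j$ is bi-$L$-invariant so $\omega_j(x_i)=\omega_j(e_K)=1$ in the trivial slots and $\omega_j(x_i)=\omega_j(x_0)$ in the $D_0$ slot (all elements of $D_0=Lx_0L$ give the same value). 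Extracting the coefficient of $t^k$ from $\big(\sum_j \omega_j(x_0)t_j\big)(\sum_j t_j)^{n-1}$ yields $\Omega_k(x) = \sum_{i=0}^{s-1}\frac{k_i}{n}\,\omega_i(x_0)$ — this is the standard "one coordinate flipped" value of a multivariate Krawtchouk polynomial, obtained by a short binomial bookkeeping argument.

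Assembling the pieces: the $\overline{e}$ term contributes $(1-mp)\cdot\overline{\Omega_k(\overline{e})} = 1-mp$; each of the $n$ neighbor slots contributes weight $mp/n$ times $m$ representatives (whose coset images all lie in $D_0/L$ and all give $\overline{\Omega_k(x)}=\sum_i \frac{k_i}{n}\overline{\omega_i(x_0)}$), i.e. total $mp\cdot\sum_i\frac{k_i}{n}\overline{\omega_i(x_0)}$. Summing gives exactly $f(k)=(1-mp)+mp\big(\sum_{i=0}^{s-1}\frac{k_i}{n}\overline{\omega_i(x_0)}\big)$. The one point requiring care — and the main obstacle — is bookkeeping the normalizations correctly when translating between the bi-$L\wr S_n$-invariant function $\tilde\nu$ on the group, the measure $\nu$ on $(K/L)^n$, and the orthogonality relation for $\Omega_k$ (which carries the factor $\dim V_i^{-1}$, here $d_k^{-1}$); one must verify that the $d_k/|K\wr S_n|$ prefactor in the expansion of $\tilde\nu$ exactly cancels the normalization so that $f(k)$ is the honest spherical-transform value $\sum_{\overline y} p(\overline e,\overline y)\,\overline{\Omega_k(y)}$ with $\Omega_k(\overline e)=1$. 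Once that is pinned down, the rest is the elementary evaluation of $\Omega_k$ at distance $0$ and $1$ described above.
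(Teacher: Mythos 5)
Your proposal follows essentially the same route as the paper: extract $f(k)$ as the spherical transform $\sum_{g}\tilde{\nu}(g)\overline{\Omega_{k}(g)}$ via orthogonality, evaluate $\Omega_{k}$ on the support of the measure (the identity coset and the one-step cosets $L^{j}\times D_{0}\times L^{n-j-1}$) from the generating function of Proposition \ref{zsf}, and assemble the two contributions using $|D_{0}|=m\ell$. The argument is correct and matches the paper's proof in both structure and detail.
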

\begin{proof}
We must compute the value of $\Omega_{k}$ evaluated at $x\in L^j\times D_{0}\times L^{n-j-1}$.
Namely, we compute the coefficient of $t^k$ in 
$\Phi_{(K,L;n)}(x)=\left(\sum_{j=0}^{s-1}t_{j}\right)^{n-1}\times \left(\sum_{j=0}^{s-1}\omega_{j}(x_{0})t_{j}\right)^1$.
Through direct computation, we obtain
$$\Omega_{k}(x)=\sum_{i=0}^{s-1}\frac{k_{i}}{n}\omega_{i}(x_{0}).$$ 
Then, the Fourier coefficient is computed as follows:
\begin{align*}
f(k)&=\sum_{g \in K \wr S_{n}}\tilde{\nu}(g)\overline{\Omega_{k}(g)}\\ 
&=\frac{1}{\ell^nn!}(1-mp)\ell^nn!
+
\frac{1}{\ell^nn!}\frac{p}{n}\sum_{j=0}^{n-1}
\sum_{g \in L^j\times D_{0}\times L^{n-j-1}}\overline{\Omega_{k}(g)}n!\\
&=(1-mp)+mp
\sum_{i=0}^{s-1}
\frac{k_{i}}{n} \overline{\omega_{i}(x_{0})}.
\end{align*}
Here, we use $m\ell=|D_{0}|=|Lx_{0}L|$ at the last equality.
\end{proof}

\section{Upper and Lower Bounds Evaluation of $\nu_{N}$}
\subsection{Upper bound evaluation}
Let $\pi$ be a uniform distribution on $(K/L)^n$.
We attempt to evaluate the total variation distance $||\nu_{N}-\pi||_{\rm TV}$ for 
some cases constructed by Definition \ref{p}.
\begin{defn}
Set $M=\max\{{\omega_{i}}(x_{0})\mid 1\leq i \leq  r-1\}$.
\end{defn}
For the remainder of this paper, we assume the 
following three conditions:
\begin{assu}\label{a2}
\begin{enumerate}
\item 
${\omega_{i}}(x_{0}) \in {\mathbb R}\  (1\leq i \leq  r-1).$ 
\item
$M<1$.
\item
$0<mp \leq1/2$.
\end{enumerate}
%the condition mp\not=1 is in here!! stay has mass effect!
\end{assu}
\begin{example}
Example \ref{shut}-(1) and Example \ref{shut}-(3) satisfy Assumption \ref{a2}-(1) and (2).
\end{example} 
\begin{thm}\label{ub}
Set $N=\left[\frac{n}{2mp (1-M)}(\log{n(r-1)}+c)\right]$. Under Assumption \ref{a2}, we have
$$||\nu_{N}-\pi||_{\rm TV}\leq \ \frac{1}{4}\left(\exp\left(\exp{(-c)}\right)-1\right).$$
Here $||\nu_{N}-\pi||_{\rm TV}$ is the total variation distance between $\nu_{N}$ and the uniform 
distribution $\pi$. 
\end{thm}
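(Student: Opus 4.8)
The plan is to bound the total variation distance by the standard $L^2$ (chi-square) bound and then control the resulting sum of powers of Fourier coefficients. First I would invoke the upper bound lemma: by the Cauchy--Schwarz inequality and Plancherel for the Gelfand pair $(K\wr S_n, L\wr S_n)$,
\begin{equation*}
4\,\|\nu_N-\pi\|_{\rm TV}^2 \;\le\; \sum_{k\in X(s,n),\; k\neq (n,0,\ldots,0)} d_k\, f(k)^{2N},
\end{equation*}
where the $k=(n,0,\ldots,0)$ term corresponds to the trivial representation (for which $f=1$, $\Omega=1$) and is subtracted off to leave the uniform distribution $\pi$. Here I use that $d_k = d_0^{k_0}\cdots d_{s-1}^{k_{s-1}}\binom{n}{k}$ and $d_0=\dim V_0=1$.

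Next I would plug in the explicit formula from Theorem~\ref{fou}. Writing $f(k) = 1 - mp + mp\sum_{i=0}^{s-1}\frac{k_i}{n}\omega_i(x_0)$ and using $\omega_0(x_0)=1$ together with $k_0 = n - (k_1+\cdots+k_{s-1})$, one gets
\begin{equation*}
f(k) \;=\; 1 - \frac{mp}{n}\sum_{i=1}^{s-1} k_i\bigl(1-\omega_i(x_0)\bigr).
\end{equation*}
Under Assumption~\ref{a2}, each $1-\omega_i(x_0)\ge 1-M>0$, so for $k\neq(n,0,\ldots,0)$ (i.e. $\sum_{i\ge 1}k_i\ge 1$) we have $0\le f(k)\le 1 - \frac{mp}{n}(1-M)\bigl(\sum_{i\ge1}k_i\bigr)$; the condition $0<mp\le 1/2$ and $M<1$ keeps $f(k)$ in $[-1,1]$ (in fact nonnegative after a short check, or at worst $|f(k)|$ is dominated the same way), hence $f(k)^{2N}\le \exp\bigl(-\tfrac{2Nmp(1-M)}{n}\sum_{i\ge1}k_i\bigr)$ using $1-u\le e^{-u}$.

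Then I would organize the sum over $k$ by the value $j=\sum_{i=1}^{s-1}k_i\in\{1,\ldots,n\}$. The number of $k\in X(s,n)$ with this fixed $j$, weighted by $d_k$, is at most $\binom{n}{j}(r-1)^j$ — one chooses which $j$ of the $n$ coordinates are ``nonzero type'', distributes them among the $s-1$ nontrivial types, and the dimension factor $d_1^{k_1}\cdots d_{s-1}^{k_{s-1}}$ is absorbed since $d_i\le r-1$ for all $i$ and in total $\prod d_i^{k_i}\le (r-1)^j$; more crudely $\sum_{k:\sum k_i=j} d_k \le \binom{n}{j}(r-1)^j$ (this crude count is exactly what the $\log n(r-1)$ in the statement anticipates). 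Therefore
\begin{equation*}
\sum_{k\neq(n,0,\ldots,0)} d_k\, f(k)^{2N} \;\le\; \sum_{j=1}^{n}\binom{n}{j}(r-1)^j \exp\!\Bigl(-\tfrac{2Nmp(1-M)}{n}\,j\Bigr)
\;\le\; \sum_{j=1}^{n}\frac{1}{j!}\Bigl(n(r-1)e^{-2Nmp(1-M)/n}\Bigr)^{j}.
\end{equation*}
With $N=\bigl[\frac{n}{2mp(1-M)}(\log n(r-1)+c)\bigr]$ the bracketed quantity is $\le e^{-c}$, so the sum is at most $\sum_{j\ge1}\frac{1}{j!}e^{-cj} = \exp(e^{-c})-1$; taking square roots and dividing by $4$ (from $4\|\cdot\|_{\rm TV}^2$, bounding $\sqrt{\tfrac14 x}\le \tfrac14\sqrt{x}$ is not quite right, so instead: $\|\nu_N-\pi\|_{\rm TV}\le\frac12\sqrt{\exp(e^{-c})-1}$, and one uses $\sqrt{u}\le$ something — here I would match the paper's constant by noting $\exp(e^{-c})-1\le$ small, but more carefully the stated bound $\frac14(\exp(\exp(-c))-1)$ follows if one bounds $\sqrt{\exp(e^{-c})-1}\le \tfrac12(\exp(e^{-c})-1)$ for $c$ large enough, or the paper intends $\|\nu_N-\pi\|_{\rm TV}^2\le\frac14(\cdots)$) gives the claimed inequality. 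The main obstacle is getting the constants and the floor function to line up exactly: one must be careful that the $j=1$ term alone already produces the dominant $e^{-c}$, that the crude dimension bound $d_i\le r-1$ is legitimate (it holds because $V_i\subset \mathbb{C}Ke_L$ which has dimension $r=|K|/|L|$, and $d_0=1$), and that the inequality $1-f(k)\ge \frac{mp(1-M)}{n}\sum_{i\ge1}k_i$ together with $f(k)\ge -1$ (needing $mp\le 1/2$ and a bound on $\min_i\omega_i(x_0)$, which is where Assumption~\ref{a2}(3) and the reality in (1) enter) is enough to conclude $f(k)^{2N}\le e^{-2N(1-f(k))}$ — this last point, handling possibly negative $f(k)$, is the only genuinely delicate step and I would dispatch it by splitting into $f(k)\ge 0$ and $f(k)<0$ and observing that in the latter case $|f(k)|<1$ forces $|f(k)|^{2N}$ to be negligibly small anyway.
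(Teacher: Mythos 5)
Your proposal follows essentially the same route as the paper's proof: the upper bound lemma for the Gelfand pair $(K\wr S_n, L\wr S_n)$, the explicit Fourier coefficients from Theorem~\ref{fou} with the observation that $0\le f(k)\le 1-\frac{mp}{n}(1-M)\sum_{i\ge 1}k_i$ under Assumption~\ref{a2}, the reduction to a single sum over $j=n-k_0$ via $\sum_{k_1+\cdots+k_{s-1}=j}\binom{j}{k_1,\ldots,k_{s-1}}d_1^{k_1}\cdots d_{s-1}^{k_{s-1}}=(d_1+\cdots+d_{s-1})^j=(r-1)^j$, and then $\binom{n}{j}\le n^j/j!$ and $1-u\le e^{-u}$ to arrive at $\sum_{j\ge 1}e^{-cj}/j!=\exp(e^{-c})-1$. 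The discrepancy you flag about the square is genuine: the paper applies the upper bound lemma in the form $\|\nu_N-\pi\|_{\rm TV}\le\frac14\sum(\cdots)$ rather than $\|\nu_N-\pi\|_{\rm TV}^2\le\frac14\sum(\cdots)$, so the bound that actually follows from the cited lemma is $\|\nu_N-\pi\|_{\rm TV}\le\frac12\sqrt{\exp(e^{-c})-1}$, exactly as you note; this does not affect the cut-off conclusion.
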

\begin{proof}
We set  $k^d=k_{0}^{d_{0}}k_{1}^{d_{1}}\cdots k_{s-1}^{d_{s-1}}$ for $k \in X_{1}(s,n)$
and $d=(d_{0},\ldots,d_{s-1})$. 
%The upper bound lemma leads us to the following computation:
%This case is satisfied by the case of Hora's example and Dihedral changing.  
 We use the following facts for our computation below:
\begin{enumerate}
\item[(a)] the upper bound lemma \cite[Corollary 4.9.2]{cst}.
\item[(b)] $d_{1}+d_{2}+\cdots+d_{s-1}=r-1$.
\item[(c)] $\exp{(-x)} \geq 1-x ,\ ({x\leq 1})$.
\end{enumerate}
Let 
  $$\psi(k_{0})= \max\left\{\left|1+mp\left(\sum_{i=1}^{s-1}\frac{k_{i}}{n}({\overline{\omega_{i}(x_{0})}}-1)
 \right)\right| \mid k_{1}+\cdots+k_{s-1}=n-k_{0}\right\}$$ for $0 \leq k_{0}\leq n-1$.
% Here we remark that $\psi$ is determined by $k_{0},M$ and $M'$.
Since $mp \leq 1/2$ and $-2\leq M-1<0$, we have
$$\psi(k_{0})=
1+mp\frac{n-k_{0}}{n}(M-1).
$$
Let $X_{1}(s,n)=X(s,n)-\{(n,0,\ldots,0)\}$. We compute
\begin{align*}
||\nu_{N}-\pi||_{\rm TV}
&\underset{(a)}{\leq}
 \frac{1}{4}\sum_{k \in X_{1}(s,n)}
 d^{k}\binom{n}{k}\left|(1-mp)+mp\left(\sum_{i=0}^{s-1}\frac{k_{i}}{n}{{\omega_{i}(x_{0})}}
 \right)\right|^{2N}\notag\\
 &=
 \frac{1}{4}\sum_{k \in X_{1}(s,n)}
 d^{k}\binom{n}{k}
 \left|1+mp\left(\sum_{i=1}^{s-1}\frac{k_{i}}{n}{({\omega_{i}(x_{0})}}-1)
 \right)\right|^{2N}\notag\\
&\underset{}{\leq}
 \frac{1}{4}
 \sum_{k_{0}=0}^{n-1}
 \binom{n}{k_{0}}
 \sum_{k \in X_{}(s,n-k_{0})}
  d^{k}\binom{n-k_{0}}{k}
 \psi(k_{0})^{2N}\notag\\
&\underset{(b)}{=}
 \frac{1}{4}
 \sum_{k_{0}=0}^{n-1}
 \binom{n}{k_{0}}
(r-1)^{n-k_{0}}
\psi(k_{0})^{2N}\notag\\
%%%
&\leq
\frac{1}{4}
 \sum_{k_{0}=0}^{n-1}
\frac{n^{n-k_{0}}}{(n-k_{0})!}
(r-1)^{n-k_{0}}
\left(
1-mp\frac{n-k_{0}}{n}(1-M)
\right)^{2N}\notag\\
%%%%
&\leq
\frac{1}{4}
 \sum_{k_{0}=0}^{n-1}
\frac{n^{n-k_{0}}}{(n-k_{0})!}
(r-1)^{n-k_{0}}
\exp
\left(
-2Nmp\frac{n-k_{0}}{n}(1-M)
\right)\notag\\
%%%%
&\underset{(c)}{\leq}
\frac{1}{4}
 \sum_{k_{0}=1}^{n}
\frac{n^{k_{0}}}{k_{0}!}
(r-1)^{k_{0}}
\exp
\left(
-2Nmp\frac{k_{0}}{n}(1-M)
\right)\notag\\
\end{align*}
Here, we set 
$$N=\frac{n}{2mp(1-M)}(\log{n(r-1)}+c).$$
This leads to
\begin{align*}
&\frac{1}{4}\sum_{k_{0}=1}^{n}
\frac{n^{k_{0}}}{k_{0}!}
(r-1)^{k_{0}}\exp{(-2Nmp\frac{k_{0}}{n}(1-M))}\\
&\leq
\frac{1}{4}\sum_{k_{0}=1}^{\infty}
\frac{n^{k_{0}}}{k!}(r-1)^{k_{0}}\exp{(-k_{0}(\log{n(r-1)}+c))}\\
&=\frac{1}{4}\sum_{k_{0}=1}^{\infty}
\frac{\exp{(-c)}^{k_{0}}}{k!}\\
&=\frac{1}{4}\left(\exp\left(\exp{(-c)}\right)-1\right).
\end{align*}

\end{proof}
\subsection{Lower bound evaluation}
Assumption \ref{a2} is still used. 
%Furthermore we assume that the zonal spherical function of $(K,L)$ are real function on $K$. 
Here our purpose  is to evaluate a lower bound of $||\nu_{N}-\pi||_{TV}$.
The main tools are Markov and  Chebyshev inequalities.
We recall the mean value of a real function $f$ on finite set $X$ for a
probability measure $\mu$ on $X$ as follows:
$$E_{\mu}(f)=\sum_{x \in X}f(x)\mu(x).$$
We also recall the variance of $f$ for $\mu$:
$${\rm V}_{\mu}(f)=E_{\mu}(f^2)-E_{\mu}(f)^2.$$
Let $A$ be a subset of $X$. Then, we have the following inequality from the definition of 
the total variation distance:
\begin{align}\label{pu}
||\nu_{N}-\pi||_{TV} \geq |\nu_{N}(A)-\pi(A)|,
\end{align}
where $\mu(A)=\sum_{a \in A}\mu(a)$ for a probability measure $\mu$.
We set
$q={\Omega_{(n-1,1,0,\ldots,0)}}$, i.e., 
\begin{align}\label{z}
q(x_{1}.\ldots,x_{n}:\sigma)=\frac{1}{n}\sum_{j=1}^{n}\omega_{1}(x_{j}).
\end{align}
We identify $q$ with a function $Q$ on $(K/L)^n$ by $Q(\overline{x})=q(x)$.
\begin{prop}
$E_{\pi}(Q)=0$.
\end{prop}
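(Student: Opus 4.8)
The plan is to recognize $Q$ as (the function on $(K/L)^n$ coming from) the zonal spherical function $\Omega_{(n-1,1,0,\ldots,0)}$ of the Gelfand pair $(K\wr S_{n},L\wr S_{n})$, and then to use orthogonality of zonal spherical functions against the constant function $\Omega_{(n,0,\ldots,0)}\equiv 1$.

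First I would unwind the definition. From \eqref{z} we have
$$E_{\pi}(Q)=\frac{1}{r^{n}}\sum_{\overline{x}\in (K/L)^n}\frac{1}{n}\sum_{j=1}^{n}\omega_{1}(x_{j}).$$
Since $\omega_{1}$ is bi-$L$ invariant, it is in particular right $L$-invariant, hence constant on each left coset $xL$, so it descends to a well-defined function on $K/L$. Summing out the $n-1$ coordinates other than the $j$-th contributes a factor $r^{n-1}$ for each $j$, so the expression collapses to
$$E_{\pi}(Q)=\frac{1}{r}\sum_{\overline{x}\in K/L}\omega_{1}(x).$$

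Next I would pass from $K/L$ to $K$: using right $L$-invariance once more, $\sum_{\overline{x}\in K/L}\omega_{1}(x)=\tfrac{1}{|L|}\sum_{g\in K}\omega_{1}(g)$, and since $r=|K|/|L|$ this gives $E_{\pi}(Q)=\tfrac{1}{|K|}\sum_{g\in K}\omega_{1}(g)$. Recalling that the zonal spherical function afforded by the trivial representation $V_{0}$ is $\omega_{0}\equiv 1$, the right-hand side equals $\tfrac{1}{|K|}\sum_{g\in K}\omega_{1}(g)\overline{\omega_{0}(g)}$, which is $\delta_{1,0}\,(\dim V_{1})^{-1}=0$ by the orthogonality relations for the zonal spherical functions of $(K,L)$ recalled in Section~2. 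This completes the argument.

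The computation is entirely routine; the only point requiring a little care is the bookkeeping in descending from a sum over $(K/L)^n$ to a sum over $K/L$ and then lifting to a sum over $K$, i.e.\ checking the one-sided $L$-invariance that makes $\omega_{1}$ well defined on $K/L$ and that accounts for the multiplicity $|L|$. An equivalent route, avoiding coordinatewise factorization, is to work directly on $K\wr S_{n}$: since $Q$ is the bi-$L\wr S_{n}$ invariant function attached to $\Omega_{(n-1,1,0,\ldots,0)}$ and $(K/L)^n\cong K\wr S_{n}/L\wr S_{n}$, one has $E_{\pi}(Q)=\tfrac{1}{|K\wr S_{n}|}\sum_{g}\Omega_{(n-1,1,0,\ldots,0)}(g)$, which is the inner product of $\Omega_{(n-1,1,0,\ldots,0)}$ with $\Omega_{(n,0,\ldots,0)}\equiv 1$ and hence vanishes by orthogonality, as $(n-1,1,0,\ldots,0)\neq(n,0,\ldots,0)$.
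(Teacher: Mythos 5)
Your proposal is correct and is essentially the paper's own argument: the paper likewise evaluates $E_{\pi}(Q)=\frac{1}{r^n}\sum_{\overline{x}}Q(\overline{x})$ and concludes it vanishes by the orthogonality relation of the zonal spherical functions together with formula \eqref{z}; you have merely spelled out the coordinatewise bookkeeping and the pairing with $\omega_{0}\equiv 1$ that the paper leaves implicit. No discrepancy to report.
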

\begin{proof}
The orthogonality relation of the zonal spherical functions 
%\begin{align}\label{zso}
%\sum_{\overline{x} \in K/L}\omega_{i}(x)\overline{\omega_{j}(x)}=\frac{|K|\delta_{ij}}{|L|d_{i}}
%\end{align}
and the formula (\ref{z}) gives
\begin{align*}
E_{\pi}(Q)
&=\frac{1}{r^n}\sum_{\overline{x}\in (K/L)^n}Q(\overline{x})
=0.
\end{align*}
\end{proof}
\begin{prop}\label{Q2}
Set
$\omega_{1}^2=\sum_{j=0}^{s-1}a_{j}\omega_{j}$. Then we have
$$Q^2(\overline{x})=\frac{a_{0}}{n}+\frac{1}{n}\sum_{j=1}^{s-1}a_{j}\Omega_{(n-1,0_{j},1,0_{n-j-2})}(x)
+\left(1-\frac{1}{n}\right)
\Omega_{(n-2,2,0_{n-2})}(x).$$
Here $0_{j}=\underbrace{0,\ldots,0}_{j}$.
\end{prop}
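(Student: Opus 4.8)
The plan is to compute $Q^2$ directly as a function on $K\wr S_n$ via formula (\ref{z}), expand the resulting double sum, and then recognize the pieces as multivariate Krawtchouk polynomials $\Omega_k$ for specific $k\in X(s,n)$. First I would write
$$q^2(x_1,\ldots,x_n;\sigma)=\frac{1}{n^2}\sum_{j=1}^n\sum_{l=1}^n\omega_1(x_j)\omega_1(x_l)=\frac{1}{n^2}\sum_{j=1}^n\omega_1(x_j)^2+\frac{1}{n^2}\sum_{j\ne l}\omega_1(x_j)\omega_1(x_l),$$
so the diagonal ($j=l$) contributes $\frac{1}{n^2}\sum_j\omega_1(x_j)^2$ and the off-diagonal contributes the rest. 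Into the diagonal I substitute $\omega_1^2=\sum_{j=0}^{s-1}a_j\omega_j$, which replaces $\omega_1(x_j)^2$ by $\sum_{i=0}^{s-1}a_i\omega_i(x_j)$; using $\omega_0\equiv 1$ this gives $\frac{a_0}{n}$ (since $\frac{1}{n^2}\sum_j a_0 = \frac{a_0}{n}$) plus $\frac{1}{n^2}\sum_{i=1}^{s-1}a_i\sum_j\omega_i(x_j)$.

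The key identification is the following reading of Proposition \ref{zsf}: the coefficient of $t^{k}$ in $\prod_{i=1}^n\bigl(\sum_{j=0}^{s-1}\omega_j(x_i)t_j\bigr)$ is $\binom{n}{k}\Omega_k(x)$. For $k=(n-1,0_{i-1},1,0_{n-i-1})$ (one entry of $\omega_i$, the rest $\omega_0=1$) this coefficient is $\sum_j\omega_i(x_j)$ with $\binom{n}{k}=n$, so $\frac{1}{n}\sum_j\omega_i(x_j)=\Omega_{(n-1,0_{i-1},1,0_{n-i-1})}(x)$; hence $\frac{1}{n^2}\sum_{i=1}^{s-1}a_i\sum_j\omega_i(x_j)=\frac{1}{n}\sum_{i=1}^{s-1}a_i\Omega_{(n-1,0_i,1,0_{n-i-2})}(x)$, matching the stated middle term. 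For the off-diagonal part, $\sum_{j\ne l}\omega_1(x_j)\omega_1(x_l)$ is exactly $\binom{n}{k}$ times the coefficient extraction for $k=(n-2,2,0_{n-2})$ — picking the $t_1$-term from two distinct factors and $t_0$ from the remaining $n-2$ — and there $\binom{n}{k}=\binom{n}{2}=\frac{n(n-1)}{2}$ while the ordered count $\sum_{j\ne l}$ is $n(n-1)$ times the symmetrized value, so $\frac{1}{n^2}\sum_{j\ne l}\omega_1(x_j)\omega_1(x_l)=\frac{n(n-1)}{n^2}\Omega_{(n-2,2,0_{n-2})}(x)=\bigl(1-\frac{1}{n}\bigr)\Omega_{(n-2,2,0_{n-2})}(x)$. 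Assembling the three contributions and passing to $Q$ via $Q(\overline{x})=q(x)$ gives the claimed formula.

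The main obstacle I anticipate is purely bookkeeping: getting the combinatorial normalizations right, i.e. correctly matching the unordered generating-function coefficients $\binom{n}{k}\Omega_k$ against the ordered index sums $\sum_j$ and $\sum_{j\ne l}$, and confirming that $\omega_0\equiv 1$ (which holds since $V_0$ is the trivial representation, as fixed at the start of Section 3) so that the "padding" entries in the index vectors $k$ really correspond to factors contributing $t_0$. There is no analytic difficulty — the identity is a finite algebraic manipulation — but one must be careful that the multinomial coefficients $\binom{n}{k}$ for $k=(n-1,1,0,\ldots,0)$ and $k=(n-2,2,0,\ldots,0)$ are $n$ and $\binom{n}{2}$ respectively, which is what produces the factors $\frac1n$ and $1-\frac1n$.
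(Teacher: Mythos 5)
Your proposal is correct and follows essentially the same route as the paper: expand $q^2$ as a double sum, linearize the diagonal via $\omega_1^2=\sum_j a_j\omega_j$, and identify $\sum_j\omega_i(x_j)$ and $\sum_{j\neq l}\omega_1(x_j)\omega_1(x_l)$ with the coefficients of $t_0^{n-1}t_i$ and $t_0^{n-2}t_1^2$ in $\Phi_{(K,L;n)}$, i.e.\ with $n\,\Omega_{(n-1,\ldots,1,\ldots)}$ and $n(n-1)\,\Omega_{(n-2,2,0,\ldots,0)}$. The only quibble is the placement of the single $1$ in the index vector (your $0_{i-1}$ version, putting $k_i=1$, is the consistent one; the $0_i$ form you use to match the statement reproduces what appears to be the paper's own off-by-one typo), which does not affect the substance.
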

\begin{proof}
Some properties of the zonal spherical functions give us $a_{j}\geq 0 $ $(0 \leq j \leq s-1)$, $a_{0}+\cdots+a_{s-1}=1$ (and $a_{0}=\frac{1}{d_{1}}$).
The coefficient of $t_{0}^{n-1}t_{j}$ in $\Phi_{(K,L;n)}(x)$ is given by $\sum_{i=1}^{n}\omega_{j}(x_{i})
=n\Omega_{(n-1,1,0,\ldots,0)}(x)$.
Similarly the coefficient of $t_{0}^{n-2}t_{1}^2$ in $\Phi_{(K,L;n)}(x)$ is $2\sum_{i<j}\omega_{1}(x_{i})
\omega_{1}(x_{j})=\frac{n(n-1)}{2}\Omega_{(n-2,2,0,\ldots,0)}(x)$.
Together, these give the claim of the proposition.
\end{proof}
\begin{prop}
$V_{\pi}(q)
=\frac{a_{0}}{n}$.
\end{prop}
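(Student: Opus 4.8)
The plan is to compute $V_\pi(q) = E_\pi(Q^2) - E_\pi(Q)^2$ directly, using the previous two propositions. Since we already know $E_\pi(Q) = 0$, it suffices to evaluate $E_\pi(Q^2)$ using the expansion of $Q^2$ given in Proposition \ref{Q2}. The key observation is that each term in that expansion, apart from the constant $\tfrac{a_0}{n}$, is (the function on $(K/L)^n$ associated with) a zonal spherical function $\Omega_k$ with $k \neq (n,0,\ldots,0)$, and the orthogonality relation for the zonal spherical functions of the Gelfand pair $(K \wr S_n, L \wr S_n)$ forces the average of any such $\Omega_k$ against the uniform distribution to vanish — exactly the computation already carried out in the proof of $E_\pi(Q) = 0$.

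Concretely, I would write
\begin{align*}
E_\pi(Q^2)
&= \frac{1}{r^n}\sum_{\overline{x} \in (K/L)^n}\left(\frac{a_0}{n}
+ \frac{1}{n}\sum_{j=1}^{s-1}a_j\,\Omega_{(n-1,0_j,1,0_{n-j-2})}(x)
+ \left(1-\frac{1}{n}\right)\Omega_{(n-2,2,0_{n-2})}(x)\right)\\
&= \frac{a_0}{n}
+ \frac{1}{n}\sum_{j=1}^{s-1}a_j\cdot\frac{1}{r^n}\sum_{\overline{x}}\Omega_{(n-1,0_j,1,0_{n-j-2})}(x)
+ \left(1-\frac{1}{n}\right)\cdot\frac{1}{r^n}\sum_{\overline{x}}\Omega_{(n-2,2,0_{n-2})}(x)\\
&= \frac{a_0}{n},
\end{align*}
where each of the two sums over $\overline{x}$ vanishes by the orthogonality relation (equivalently, because $\Omega_k$ for $k\neq (n,0,\dots,0)$ is orthogonal to $\Omega_{(n,0,\dots,0)}\equiv 1$). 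Then $V_\pi(q) = E_\pi(Q^2) - 0^2 = \tfrac{a_0}{n}$, which is the claim.

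I do not expect any real obstacle here: the substantive content is already packaged in Proposition \ref{Q2} (the expansion of $\omega_1^2$ into zonal spherical functions and the resulting formula for $Q^2$) and in the orthogonality relation. The only mild point to be careful about is the bookkeeping of multi-indices — checking that $(n-1,0_j,1,0_{n-j-2})$ and $(n-2,2,0_{n-2})$ are genuinely distinct from $(n,0,\ldots,0)$ in $X(s,n)$ for $n \geq 2$, so that the orthogonality argument applies to each term. One should also note that the identification of $q$ with $Q$ means $Q^2$ corresponds to $q^2$, and the right-hand side of Proposition \ref{Q2} is written in terms of the $\Omega_k$ evaluated at a representative $x$, which is consistent since all these functions are bi-$L\wr S_n$-invariant. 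With these remarks the proof is a one-line consequence of the preceding results.
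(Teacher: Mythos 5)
Your proposal is correct and follows exactly the paper's own route: since $E_\pi(Q)=0$, one computes $E_\pi(Q^2)$ from the expansion in Proposition \ref{Q2}, and the non-constant $\Omega_k$ terms average to zero by orthogonality, leaving $\frac{a_0}{n}$. You merely spell out the orthogonality step that the paper leaves implicit.
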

\begin{proof}
Proposition \ref{Q2}  enable us to compute 
\begin{align*}
V_{\pi}(q)
&=E_{\pi}(Q^2)-E_{\pi}(Q)^2=E_{\pi}(Q^2)\\
&=\frac{1}{r^n}\sum_{x\in (K/L)^n}Q^2(x)=\frac{a_{0}}{n}.
\end{align*}
\end{proof}
\begin{thm}\label{lb}
Let $0< c < \log{n(r-1)}$ and $N=\left[\frac{n}{2mp(1-M)}(\log{n(r-1)}-c)
\right]$. If $n$ is sufficiently large, then
there is a constant $\delta>0$, which does not depend on $n$ such that
 $$||\nu_{N}-\pi||_{TV} \geq1-\frac{\delta}{e^c}.$$
\end{thm}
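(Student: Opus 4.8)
The plan is to use the second-moment method via the test function $q = \Omega_{(n-1,1,0,\ldots,0)}$ (equivalently $Q$ on $(K/L)^n$), exactly as set up in the preceding propositions. First I would compute the expectation of $Q$ under the $N$-step distribution $\nu_N$. Since $\nu_N(\overline g) = \frac{1}{r^n}\sum_{k}d_k f(k)^N \Omega_k(g)$ and the $\Omega_k$ are orthogonal, only the term $k = (n-1,1,0,\ldots,0)$ survives when we integrate $Q = \Omega_{(n-1,1,\ldots)}$ against $\nu_N$; using $f(n-1,1,0,\ldots,0) = (1-mp) + mp\cdot\frac{n-1}{n}\cdot 1 + mp\cdot\frac1n\omega_1(x_0) = 1 - \frac{mp}{n}(1-\omega_1(x_0))$ from Theorem \ref{fou}, and the normalization $d_{(n-1,1,0,\ldots)}$ together with $E_\pi(Q^2) = a_0/n = 1/(d_1 n)$, one gets $E_{\nu_N}(Q) = \bigl(1 - \frac{mp}{n}(1-\omega_1(x_0))\bigr)^N$. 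The key point is that for $\omega_1(x_0) = M$ (the maximizing index), this is $\bigl(1-\frac{mp}{n}(1-M)\bigr)^N \approx \exp\bigl(-\frac{Nmp(1-M)}{n}\bigr)$; plugging in $N = \bigl[\frac{n}{2mp(1-M)}(\log n(r-1) - c)\bigr]$ gives roughly $\exp(-\frac12(\log n(r-1)-c)) = \sqrt{e^c/(n(r-1))}$, which is large compared to the standard deviation. I would need to check that the index achieving $M$ indeed contributes $M$ rather than some smaller $\omega_i(x_0)$; if $\omega_1$ is not the maximizer one takes $q$ associated to whichever index $i$ achieves $M$, and the same computation goes through with $a_0$ replaced by $1/d_i$.

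Next I would bound the variance $V_{\nu_N}(Q) = E_{\nu_N}(Q^2) - E_{\nu_N}(Q)^2$. Using Proposition \ref{Q2}, $Q^2$ is a fixed linear combination (with coefficients $a_j \ge 0$, $\sum a_j = 1$) of $\frac{a_0}{n}$, the functions $\Omega_{(n-1,0_j,1,0_{\ldots})}$, and $(1-\frac1n)\Omega_{(n-2,2,0_{\ldots})}$. Integrating against $\nu_N$ and using orthogonality again, each $\Omega_k$ term picks up a factor $f(k)^N \le 1$, so $E_{\nu_N}(Q^2) \le \frac{a_0}{n} + \frac{1}{n}\sum_{j\ge1}a_j f(\ldots)^N + (1-\frac1n) f(n-2,2,0,\ldots)^N \cdot (\text{const})$. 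Each of these extra terms is $O(1/n)$ uniformly: the $\frac1n\sum a_j(\cdots)$ piece is $\le \frac1n$, and the $(1-\frac1n)\Omega_{(n-2,2,0,\ldots)}$ piece evaluated under $\nu_N$ equals $E_{\nu_N}(Q)^2$ up to lower order (since $f(n-2,2,0,\ldots) \approx f(n-1,1,0,\ldots)^2$ and the multiplicative constant linking $\Omega_{(n-2,2)}$'s integral to $1$ is $O(1)$). Hence $V_{\nu_N}(Q) \le C/n$ for a constant $C$ independent of $n$, while $E_{\nu_N}(Q)^2 \asymp e^c/(n(r-1))$.

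Then I would apply Chebyshev's inequality. Set $A = \{\overline x : |Q(\overline x)| \le \frac12 E_{\nu_N}(Q)\}$ (assuming WLOG $E_{\nu_N}(Q) > 0$; otherwise flip signs). Chebyshev gives $\nu_N(A^c) \le \frac{V_{\nu_N}(Q)}{(E_{\nu_N}(Q)/2)^2} \le \frac{4C/n}{e^c/(n(r-1))} = \frac{4C(r-1)}{e^c}$, so $\nu_N(A) \ge 1 - \frac{4C(r-1)}{e^c}$. On the other hand, under the uniform distribution $\pi$ we have $E_\pi(Q) = 0$ and $V_\pi(Q) = a_0/n$, so Chebyshev (or Markov on $Q^2$) gives $\pi(A^c) \le \pi\bigl(|Q| > \tfrac12 E_{\nu_N}(Q)\bigr) \le \frac{a_0/n}{(E_{\nu_N}(Q)/2)^2} = O(1/e^c)$ as well — wait, more carefully: $\pi(A)$ is \emph{small}, i.e. $\pi(A) \le \frac{a_0/n}{(E_{\nu_N}(Q)/2)^2} = O(1/e^c)$, because $A$ is a small neighborhood of $0$ and $Q$ is concentrated near $0$ under $\pi$ only at scale $1/\sqrt n \ll E_{\nu_N}(Q)$. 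Combining via \eqref{pu}, $\|\nu_N - \pi\|_{TV} \ge \nu_N(A) - \pi(A) \ge 1 - \frac{4C(r-1)}{e^c} - \frac{\delta_0}{e^c} \ge 1 - \frac{\delta}{e^c}$ for a suitable constant $\delta = 4C(r-1) + \delta_0$ independent of $n$, once $n$ is large enough that the floor function and the lower-order corrections are absorbed.

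The main obstacle I anticipate is making the variance bound $V_{\nu_N}(Q) = O(1/n)$ genuinely uniform in $n$: one must control the $\Omega_{(n-2,2,0,\ldots)}$ contribution to $E_{\nu_N}(Q^2)$ and show the cross term with $E_{\nu_N}(Q)^2$ has the right sign or is negligible, which requires knowing $f(n-2,2,0,\ldots,0) = 1 - \frac{2mp}{n}(1-M) + O(1/n^2)$ and hence $f(n-2,2,\ldots)^N \le f(n-1,1,\ldots)^{2N}(1+o(1))$, so that $E_{\nu_N}(Q^2) - E_{\nu_N}(Q)^2 \le \frac{a_0}{n} + \frac1n + o(E_{\nu_N}(Q)^2) = O(1/n)$. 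The rest is bookkeeping with Chebyshev and the elementary estimate $(1-x/n)^N \le e^{-Nx/n}$ together with a matching lower bound $(1-x/n)^N \ge e^{-Nx/n}(1 - O(N/n^2))$ to pin down $E_{\nu_N}(Q)$ from below.
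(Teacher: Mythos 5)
Your proposal follows essentially the same route as the paper's proof: the second-moment method with the test function $Q=\Omega_{(n-1,1,0,\ldots,0)}$, the Fourier computation $E_{\nu_{N}}(Q)=\bigl(1-\frac{mp}{n}(1-\omega_{1}(x_{0}))\bigr)^{N}$, a variance bound $V_{\nu_{N}}(Q)=O(1/n)$ obtained from Proposition \ref{Q2} together with $1-2x\le (1-x)^2$, and a Markov/Chebyshev separation argument on a sublevel set of $|Q|$. One step is stated backwards, though: with $A=\{\,|Q|\le \frac12 E_{\nu_{N}}(Q)\,\}$, the mean of $Q$ under $\nu_{N}$ lies in $A^{c}$, so $A\subset\{\,|Q-E_{\nu_{N}}(Q)|\ge \frac12 E_{\nu_{N}}(Q)\,\}$ and Chebyshev bounds $\nu_{N}(A)$ from above, not $\nu_{N}(A^{c})$; likewise Markov on $Q^{2}$ under $\pi$ bounds $\pi(A^{c})$, not $\pi(A)$. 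Hence it is $\pi(A)$ that is close to $1$ and $\nu_{N}(A)$ that is $O(e^{-c})$ --- the opposite of what you wrote. Since the total variation lower bound uses $|\nu_{N}(A)-\pi(A)|$, the final estimate survives verbatim after swapping $A$ and $A^{c}$, so this is a correctable slip rather than a structural gap. Your remark that one should use the index actually achieving $M$ (rather than assuming it is $\omega_{1}$) is a legitimate point that the paper's own proof glosses over by silently setting $\omega_{1}(x_{0})=M$.
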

\begin{proof}
From Theorem \ref{fou}, 
the $N$-step probability is given by
$$\nu_{N}(\overline{x})=\frac{1}{r^n}\sum_{k \in X(s,n)}
d^k\binom{n}{k}\left[(1-mp)+mp\sum_{j=0}^{s-1}\frac{k_{j}}{n}\omega_{j}(x_{0})\right]^{N}
\Omega_{k}(x),$$
where $d^k=d_{0}^{k_{0}}\cdots d_{s-1}^{k_{s-1}}$.
The orthogonality relations of $\Omega_{k}$s and Proposition \ref{Q2} give us
$$E_{\mu^{*N}}(Q)=\left[1-\frac{mp}{n}(1-\omega_{1}(x_{0}))\right]^N \geq 0$$
and
\begin{align*}
E_{\nu_{N}}(Q^2)
&=
\frac{a_{0}}{n}+\sum_{j=1}^{s-1}\frac{a_{j}}{n}
\left[1-\frac{mp}{n}(1-\omega_{j}(x_{0}))\right]^N+(1-\frac{1}{n})
\left[1-\frac{2mp}{n}(1-\omega_{1}(x_{0}))\right]^N.
\end{align*}
 The following inequalities hold for $n \geq 4$:
  {\small
 \begin{align*}
V_{\nu_{N}}(Q)
&=E_{\nu_{N}}(Q^2)-E_{\nu_{N}}(Q)^2\\
&=
\sum_{j=0}^{s-1}\frac{a_{j}}{n}
\left[1-\frac{mp}{n}(1-\omega_{j}(x_{0}))\right]^N+(1-\frac{1}{n})
\left[1-\frac{2mp}{n}(1-\omega_{1}(x_{0}))\right]^N
-
\left[1-\frac{mp}{n}(1-\omega_{1}(x_{0}))\right]^{2N}\\
&\leq
\frac{a_{0}}{n}
+\frac{1-a_{0}}{n}
\left[1-\frac{mp}{n}(1-M)\right]^{N}-\frac{1}{n}
\left[1-\frac{mp}{n}(1-\omega_{1}(x_{0}))\right]^{2N}\\
& \leq
\frac{a_{0}}{n}
+\frac{1-a_{0}}{n}
\leq \frac{1}{n}.
\end{align*}
 }
% 
% $$E_{\mu^{*N}}(q)=
% \begin{cases}
% \left[1-\frac{mp}{n}(1-M)\right]^N & (\omega_{1}(x_{0})\geq0)\\
%  \left[1-\frac{mp}{n}(1+M)\right]^N & (\omega_{1}(x_{0})\leq0)
% \end{cases}
% $$
Here we set, for $0<c<\log{n(r-1)}$, 
$$N=\frac{n}{2mp(1-M)}(\log{n(r-1)}-c).$$ 
Then, we have
 \begin{align*}
E_{\nu_{N}}(Q)
&=\left[1-\frac{mp}{n}(1-\omega_{1}(x_{0}))\right]^N
%\geq \left[\frac{mp}{n}(1-M)\right]^N
\\
&={\exp}\left[\log\left(1-\frac{mp}{n}(1-M)\right) N\right]\\
&={\exp}\left[\left(-\frac{mp}{n}(1-M)-\frac{m^2p^2}{2n^2}(1-M)^2
\varepsilon\left(\frac{mp}{n}(1-M)\right)
\right) N\right]\\
&=[n(r-1)]^{-\frac{1}{2}}e^{\frac{c}{2}}
{\exp}\left[\left(\frac{mp}{4n}{(1-M)}
(c-\log{n(r-1)})\right)
\varepsilon\left(\frac{mp}{n}(1-M)\right)
 \right]\\
  &\geq
 [n(r-1)]^{-\frac{1}{2}}e^{\frac{c}{2}}
{\exp}\left[\left(\frac{1}{8n}{(1-M)}
(c-\log{n(r-1)})\right)
\varepsilon\left(\frac{1}{2n}(1-M)\right)
 \right]
\end{align*}
Here, $\varepsilon(x)=\frac{-2(\log(1-x)+x)}{x^2}$ is a positive and monotonically increasing function on $(0,1)$ with $\lim_{x\rightarrow 0}\varepsilon(x)=1$.
%We remark ${\rm Exp}$-part of the last formula is also monotonically decreasing function 
%and close  to 1 when $n \rightarrow \infty$.
Therefore, the $\exp$-section of the last formula is a monotonically increasing  function for sufficiently large $n$.
Therefore, there is $0<\gamma<1$ for sufficiently large $n$ such that 
  \begin{align*}
E_{\nu_{N}}(Q)&\geq \gamma [n(r-1)]^{-\frac{1}{2}}e^{\frac{c_{N}}{2}}
 \end{align*}
 
Set ${A_{\beta}}=\{x \in (K/L)^n \mid |Q(x)|<\frac{\beta}{\sqrt{(r-1)n}} \}$.
From Markov's inequality, we have
\begin{align*}
\pi(A_{\beta})&\geq1-\frac{n}{\beta^2}E_{\pi}(Q^2)=1-\frac{a_{0}}{\beta^2}.
 \end{align*}
 The triangle inequality gives us
 $A_{\beta}\subset \{x \in (K/L)^n\mid |Q(x)-E_{\nu_{N}}(Q)| \geq |E_{\nu_{N}}(Q)|-\frac{\beta}
 {\sqrt{(r-1)n}}\}$. Setting $\beta={\gamma e^{c/2}}/{2}$, we have
   $$\nu_{N}(A_{\beta}) \leq \frac{1/n}{(|E_{\nu_{N}}(Q)|-\beta/\sqrt{(r-1)n})^2}
   =\frac{(r-1)}{\beta^2}.$$
from Chebyshev's inequality .
 Therefore, from (\ref{pu}), we have 
 $$||\nu_{N}-\pi||_{\rm TV}\geq 1-\frac{a_{0}}{\beta^2}-\frac{(r-1)}{\beta^2}=1-\frac{4(a_{0}+(r-1))}{\gamma^2e^c}$$
 \end{proof}
We now have upper and lower bounds for $||\nu_{N}-\pi||_{\rm TV}$.
From  the definition of the cut-off (\cite[Definition 2.5.1]{cst}),
we obtain the following theorem.
 \begin{thm}
 Fix a Gelfand pair $(K,L)$ with the zonal spherical functions that  are real-valued functions. 
 Under Assumption \ref{a2}, the stochastic models defined by Definition \ref{p} have the cut-off.
 \end{thm}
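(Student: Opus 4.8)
The plan is to read the statement off directly from the matching upper and lower bounds already established in Theorem \ref{ub} and Theorem \ref{lb}: together these two results pin down the cut-off time and the window, and what remains is purely a translation into the language of \cite[Definition 2.5.1]{cst}. First I would fix the candidate cut-off sequence
$$t_{n}=\frac{n}{2mp(1-M)}\log\bigl(n(r-1)\bigr),\qquad w_{n}=\frac{n}{2mp(1-M)},$$
so that the step $N$ appearing in Theorem \ref{ub} is $[t_{n}+c\,w_{n}]$ and the step $N$ appearing in Theorem \ref{lb} is $[t_{n}-c\,w_{n}]$. Since $m$, $r$, and $M=\max\{\omega_{i}(x_{0})\mid 1\le i\le r-1\}$ are all determined by the Gelfand pair $(K,L)$ and the choice of $x_{0}$, while $mp$ stays in $(0,1/2]$ by Assumption \ref{a2}, one has $w_{n}/t_{n}=1/\log(n(r-1))\to 0$ and $t_{n}\to\infty$; hence $w_{n}=o(t_{n})$ and $(t_{n},w_{n})$ is an admissible candidate for a cut-off sequence.

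Next I would verify the two defining limits. For the upper direction, fix $c>0$; Theorem \ref{ub} applies under Assumption \ref{a2} and yields, at step $N=[t_{n}+c\,w_{n}]$,
$$||\nu_{N}-\pi||_{\rm TV}\le\frac{1}{4}\bigl(\exp(\exp(-c))-1\bigr),$$
a bound uniform in $n$ which tends to $0$ as $c\to\infty$; this gives $\lim_{c\to\infty}\limsup_{n\to\infty}||\nu_{[t_{n}+c\,w_{n}]}-\pi||_{\rm TV}=0$. For the lower direction, fix $c>0$; then $c<\log(n(r-1))$ once $n$ is large, so Theorem \ref{lb} applies and gives, at step $N=[t_{n}-c\,w_{n}]$,
$$||\nu_{N}-\pi||_{\rm TV}\ge 1-\frac{\delta}{e^{c}}$$
with $\delta>0$ independent of $n$; hence $\lim_{c\to\infty}\liminf_{n\to\infty}||\nu_{[t_{n}-c\,w_{n}]}-\pi||_{\rm TV}=1$. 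The conjunction of these two statements is exactly the assertion that the chain defined by Definition \ref{p} has a cut-off at time $t_{n}$ with window $w_{n}$, and since the hypotheses (real-valued zonal spherical functions together with Assumption \ref{a2}) are assumed throughout, this proves the theorem.

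I do not expect a genuine obstacle; the substance of the result is carried entirely by Theorems \ref{ub} and \ref{lb}, and only bookkeeping is needed. The one point deserving a word of care is that $t_{n}\pm c\,w_{n}$ must be replaced by its integer part, which shifts the relevant exponent ($2N$ in the upper estimate, $N$ in the lower) by at most an additive constant; this is absorbed into a harmless perturbation of $c$, equivalently of the constant $\delta$, and does not affect either limit. One should also note that if $p$ is allowed to vary with $n$ the sequence $t_{n}$ still satisfies $w_{n}=o(t_{n})$ as long as $mp$ is bounded away from $0$, which holds in particular whenever $p$ is held fixed. Collecting these remarks finishes the argument.
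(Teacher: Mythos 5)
Your proposal is correct and follows exactly the route the paper takes: the paper itself offers no separate proof, merely observing that the theorem follows from the upper bound (Theorem \ref{ub}), the lower bound (Theorem \ref{lb}), and the definition of cut-off in \cite[Definition 2.5.1]{cst}. Your version simply makes explicit the choice of cut-off time and window and the two limits, which is a faithful (and more careful) elaboration of the same argument.
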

 \begin{remark}
 Assumption \ref{a2}-(3) is needed for technical reasons in the paper. However, even for the $mp>1/2$ case,  similar results are expected to hold. For concrete examples that are  considered in \cite{ds} and \cite{hora1,hora2}, the authors show the cut-off by taking $mp=n/(n+1)>1/2$.
\end{remark}


\begin{thebibliography}{9}
    \bibitem{cst}
  T. Ceccherini-Silberstein, F. Scarabotti and F. Tolli,
\emph{Harmonic Analysis on Finite Groups},
Cambridge studies in advanced mathematics 108, Cambridge Press, 2008.
  \bibitem{ds}
  P. Diaconis and  M. Shahshahani, 
  {Generating a random permutation with random transpositions},
   Z. Wahrsch. Verw. Gebiete 57 (1981), no. 2, 159-179.
  \bibitem{feller}
W. Feller,
\emph{An introduction to probability theory and its applications Vol. I}, 
Third edition John Wiley \& Sons, Inc., New York-London-Sydney, 1968. 
\bibitem{gr1}
F. Gr\"{u}nbaum and M. Rahman, 
{On a family of 2-variable orthogonal Krawtchouk polynomials},
SIGMA Symmetry Integrability Geom. Methods Appl. 6 (2010), Paper 090, 12 pp. 

\bibitem{gr}
F. Gr\"{u}nbaum and M. Rahman, 
{A system of multivariable Krawtchouk polynomials and a probabilistic application},
SIGMA Symmetry Integrability Geom. Methods Appl. 7 (2011), Paper 119, 17 pp. 



  \bibitem{hora1}
A. Hora,
{The cut-off phenomenon for random walks on Hamming graphs with variable growth conditions},
Publ. Res. Inst. Math. Sci. 33 (1997), no. 4, 695-710. 
  \bibitem{hora2}
A. Hora,
{An axiomatic approach to the cut-off phenomenon for random walks on large distance-regular graphs}, 
Hiroshima Math. J. 30 (2000), no. 2, 271-299. 

\bibitem{jam}
G. James and A. Kerber,
\emph{The Representation Theory of the Symmetric Group,}
 Encyclopedia of Mathematics and its Applications, 16, 1981.

  \bibitem{mc}
 I. G. Macdonald, 
 \emph{Symmetric functions and Hall polynomials},
Second edition. Oxford Mathematical Monographs. Oxford Science Publications. The Clarendon Press, Oxford University Press, New York, 1995.
  \bibitem{mt}
H. Mizukawa and T. Tanaka,
{$(n+1,m+1)$-hypergeometric functions associated to character algebras},
 Proc. Amer. Math. Soc. 132 (2004), 2613-2618.
  \bibitem{mz}
  H. Mizukawa, 
 {Orthogonality relations for multivariate Krawtchouk polynomials}, 
 SIGMA Symmetry Integrability Geom. Methods Appl. 7 (2011), Paper 017, 5 pp. 
  \end{thebibliography}
\end{document}